
\documentclass[12pt,a4paper]{article}%
\usepackage{amsmath, amsfonts, amsthm,color, latexsym, amssymb}
\usepackage{amscd}
\usepackage{amsmath}
\usepackage{amsfonts}
\usepackage{amssymb}
\usepackage{graphicx}%
\setcounter{MaxMatrixCols}{30}
\providecommand{\U}[1]{\protect\rule{.1in}{.1in}}
\providecommand{\U}[1]{\protect\rule{.1in}{.1in}}
\providecommand{\U}[1]{\protect\rule{.1in}{.1in}}
\newtheorem{theorem}{Theorem}[section]
\newtheorem{proposition}[theorem]{Proposition}
\newtheorem{corollary}[theorem]{Corollary}
\newtheorem{example}[theorem]{Example}

\newtheorem{remark}[theorem]{Remark}

\newtheorem{lemma}[theorem]{Lemma}
\newtheorem{definition}[theorem]{Definition}
\setlength{\textwidth}{13.5cm}
\setlength{\textheight}{21.5cm}
\setlength{\topmargin}{-.5cm}

\begin{document}

\title{Spaceability in Banach and quasi-Banach sequence spaces}
\author{G. Botelho\thanks{Supported by CNPq Grant 306981/2008-4 and
INCT-Matem\'{a}tica.}\thinspace, D. Diniz, V. V. F\'{a}varo\thanks{Supported
by Fapemig Grant CEX-APQ-00208-09.}\thinspace, D. Pellegrino\thanks{Supported
by INCT-Matem\'{a}tica, PROCAD-NF-Capes, CNPq Grants 620108/2008-8 (Ed.
Casadinho) and 301237/2009-3.\hfill\newline\indent2010 Mathematics Subject
Classification: 46A45, 46A16, 46B45.}}
\date{}
\maketitle

\begin{abstract}
Let $X$ be a Banach space. We prove that, for a large class of Banach or
quasi-Banach spaces $E$ of $X$-valued sequences, the sets $E-\bigcup
_{q\in\Gamma}\ell_{q}(X)$, where $\Gamma$ is any subset of $(0,\infty]$, and
$E-c_{0}(X)$ contain closed infinite-dimensional subspaces of $E$ (if
non-empty, of course). This result is applied in several particular cases and
it is also shown that the same technique can be used to improve a result on
the existence of spaces formed by norm-attaining linear operators.

\end{abstract}


\section*{Introduction}

A subset $A$ of a Banach or quasi-Banach space $E$ is $\mu$\textit{-lineable}
(\textit{spaceable}) if $A\cup\{0\}$ contains a $\mu$-dimensional (closed
infinite-dimensional) linear subspace of $E$. The last few years have
witnessed the appearance of lots of papers concerning lineability and
spaceability (see, for example, \cite{Aron2, Aron, BG, lap, seo}). The aim of
this paper is to explore a technique to prove lineability and spaceability
that can be applied in several different settings. It is our opinion that this
technique was first used in the context of lineability/spaceability in our
preprint \cite{arxiv}, of which this paper is an improved version.

Let $c$ denote the cardinality of the set of real numbers $\mathbb{R}$. In
\cite{lap} it is proved that $\ell_{p}-\ell_{q}$ is $c$-lineable for every
$p>q\geq1$. With the help of \cite{timoney} this result can be substantially
improved in the sense that $\ell_{p}-\bigcup_{1\leq q<p}\ell_{q}$ is spaceable
for every $p>1$. In this paper we address the following questions: What about
the non-locally convex range $0<p<1$? Can these results be generalized to
sequence spaces other than $\ell_{p}$?

As to the first question, it is worth recalling that the structure of
quasi-Banach spaces (or, more generally, metrizable complete tvs, called
$F$-spaces) is quite different from the structure of Banach spaces. For our
purposes, the consequence is that the extension of lineability/spaceability
arguments from Banach to quasi-Banach spaces is not straightforward in
general. For example, in \cite[Section 6]{Wilanski} it is essentially proved
(with a different terminology) that if $Y$ is a closed infinite-codimensional
linear subspace of the Banach space $X$, then $X-Y$ is spaceable. A
counterexample due to Kalton \cite[Theorem 1.1]{Kalton} shows that this result
is not valid for quasi-Banach spaces (there exists a quasi-Banach space $K$
with an $1$-dimensional subspace that is contained in all closed
infinite-dimensional subspaces of $K$). Besides, the search for closed
infinite-dimensional subspaces of quasi-Banach spaces is a quite delicate
issue. Even fundamental facts are unknown, for example the following problem
is still open (cf. \cite[Problem 3.1]{hand}): Does every
(infinite-dimensional) quasi-Banach space have a proper closed
infinite-dimensional subspace? Nevertheless we solve the first question in the
positive: as a particular case of our results we get that $\ell_{p}%
-\bigcup_{1\leq q<p}\ell_{q}$ is spaceable for every $p>0$ (cf. Corollary
\ref{corol}).

As to the second question, we identify a large class of vector-valued sequence
spaces, called \textit{invariant sequence spaces} (cf. Definition \ref{defi}),
such that if $E$ is an invariant Banach or quasi-Banach space of $X$-valued
sequences, where $X$ is a Banach space, then the sets $E - \bigcup_{q
\in\Gamma}\ell_{q}(X)$, where $\Gamma$ is any subset of $(0, \infty]$, and
$E-c_{0}(X)$ are spaceable whenever they are non-empty (cf. Theorem
\ref{theorem}). Several classical sequence spaces are invariant sequence
spaces (cf. Example \ref{exam}).

In order to make clear that the technique we use can be useful in a variety of
other situations, we finish the paper with an application to the
$c$-lineability of sets of norm-attaining linear operators (cf. Proposition
\ref{Lineability of NA}).

From now on all Banach and quasi-Banach spaces are considered over a fixed
scalar field $\mathbb{K}$ which can be either $\mathbb{R}$ or $\mathbb{C}$.

\section{Sequence spaces}

In this section we introduce a quite general class of scalar-valued or
vector-valued sequence spaces and prove that certain of their remarkable
subsets have spaceable complements.

\begin{definition}
\label{defi}\rm Let $X\neq\{0\}$ be a Banach space.\newline(a) Given $x\in
X^{\mathbb{N}}$, by $x^{0}$ we mean the zerofree version of $x$, that is: if
$x$ has only finitely many non-zero coordinates, then $x^{0}=0$; otherwise,
$x^{0}=(x_{j})_{j=1}^{\infty}$ where $x_{j}$ is the $j$-th non-zero coordinate
of $x$.\newline(b) By an \textit{invariant sequence space over $X$} we mean an
infinite-dimensional Banach or quasi-Banach space $E$ of $X$-valued sequences
enjoying the following conditions:\newline(b1) For $x\in X^{\mathbb{N}}$ such
that $x^{0}\neq0$, $x\in E$ if and only if $x^{0}\in E$, and in this case
$\Vert x\Vert\leq K\Vert x^{0}\Vert$ for some constant $K$ depending only on
$E$.\newline(b2) $\Vert x_{j}\Vert_{X}\leq\Vert x\Vert_{E}$ for every
$x=(x_{j})_{j=1}^{\infty}\in E$ and every $j\in\mathbb{N}$. \newline An
\textit{invariant sequence space} is an invariant sequence space over some
Banach space $X$.
\end{definition}

Several classical sequence spaces are invariant sequence spaces:

\begin{example}
\label{exam}\rm(a) Given a Banach space $X$, it is obvious that for every
$0<p\leq\infty$, $\ell_{p}(X)$ (absolutely $p$-summable $X$-valued sequences),
$\ell_{p}^{u}(X)$ (unconditionally $p$-summable $X$-valued sequences) and
$\ell_{p}^{w}(X)$ (weakly $p$-summable $X$-valued sequences) are invariant
sequence spaces over X with their respective usual norms ($p$-norms if $0 < p
< 1$). In particular, $\ell_{p}$, $0<p\leq\infty$, are invariant sequence
spaces (over $\mathbb{K}$).\newline(b) The Lorentz spaces $\ell_{p,q}$,
$0<p<\infty$, $0<q<\infty$ (see, e.g., \cite[13.9.1]{livropietsch}). It is
easy to see that these spaces are invariant sequence spaces (over $\mathbb{K}%
$): indeed, given $0\neq x^{0}\in\ell_{p,q}$, the non-increasing rearrangement
of $x$ coincides with that of $x^{0}$. So $\Vert x\Vert_{p,q}=\Vert x^{0}%
\Vert_{p,q}<\infty$. \newline(c) The Orlicz sequence spaces (see, e.g.,
\cite[4.a.1]{lt}). Let $M$ be an Orlicz function and $\ell_{M}$ be the
corresponding Orlicz sequence space. The condition $M(0)=0$ makes clear that
$\ell_{M}$ is an invariant sequence space (over $\mathbb{K}$). For the same
reason, its closed subspace $h_{M}$ is an invariant sequence space as
well.\newline(d) Mixed sequence spaces (see, e.g., \cite[16.4]{livropietsch}).
Given $0<p\leq s\leq\infty$ and a Banach space $X$, by $\ell_{m(s;p)}\left(
X\right)  $ we mean the Banach ($p$-Banach if $0<p<1$) space of all mixed
$(s,p)$-summable sequences on $X$. It is not difficult to see that
$\ell_{m(s;p)}\left(  X\right)  $ is an invariant sequence space over $X$.
\end{example}

Now we can prove our main result. Given an invariant sequence space $E$ over
the Banach space $X$, regarding both $E$ and $\ell_{p}(X)$ as subsets of
$X^{\mathbb{N}}$, we can talk about the difference $E-\ell_{p}(X)$ and related ones.

\begin{theorem}
\label{theorem} Let $E$ be an invariant sequence space over the Banach space
$X$. Then\\
{\rm(a)} For every $\Gamma\subseteq(0,\infty]$,
$E-\bigcup_{q\in\Gamma}\ell_{q}(X)$ is either empty or spaceable.\\
{\rm(b)} $E - c_{0}(X)$ is either empty or spaceable.
\end{theorem}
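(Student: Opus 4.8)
The plan is to produce, under the assumption that the relevant set is non-empty, a single closed infinite-dimensional subspace $V\subseteq E$ all of whose non-zero elements lie in the prescribed difference set. The engine of the construction is to spread one fixed ``bad'' sequence over infinitely many disjoint blocks of coordinates, and then to use conditions (b1) and (b2) to control both membership in $E$ and the behaviour of limits. First I would normalize the bad vector. Suppose $E-\bigcup_{q\in\Gamma}\ell_{q}(X)$ is non-empty and pick $x$ in it. Deleting the zero coordinates of $x$ changes neither membership in any $\ell_{q}(X)$ (the sums $\sum_{j}\Vert x_{j}\Vert^{q}$, and the supremum when $q=\infty$, are unaffected) nor membership in $c_{0}(X)$; moreover such an $x$ has infinitely many non-zero coordinates, so $x^{0}\neq0$ and (b1) gives $x^{0}\in E$. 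Hence I may replace $x$ by $x^{0}$ and assume $x=(x_{j})_{j=1}^{\infty}$ has all coordinates non-zero, still with $x\in E$ and $x\notin\ell_{q}(X)$ for every $q\in\Gamma$ (resp. $x\notin c_{0}(X)$ in case (b), so that $\limsup_{j}\Vert x_{j}\Vert>0$).

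Next I would partition $\mathbb{N}$ into infinitely many infinite pairwise disjoint sets $N_{k}=\{n_{1}^{(k)}<n_{2}^{(k)}<\cdots\}$, and for each $k$ define $x^{(k)}\in X^{\mathbb{N}}$ by placing the $i$-th coordinate of $x$ at position $n_{i}^{(k)}$ and $0$ elsewhere. Then $(x^{(k)})^{0}=x$, so (b1) yields $x^{(k)}\in E$ with $\Vert x^{(k)}\Vert\leq K\Vert x\Vert$. Because the $x^{(k)}$ have pairwise disjoint supports and are non-zero, they are linearly independent, so $V:=\overline{\operatorname{span}}\{x^{(k)}:k\in\mathbb{N}\}$ is a closed infinite-dimensional subspace of $E$.

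The heart of the argument is to show that every non-zero $v\in V$ is bad, and this is where (b2) is essential: it makes each coordinate functional continuous, so convergence in $E$ forces coordinatewise convergence. Writing $v$ as an $E$-limit of finite combinations $\sum_{l}a_{l}^{(n)}x^{(l)}$ and reading off the coordinate at position $n_{1}^{(k)}$, which by disjointness equals $a_{k}^{(n)}x_{1}$, I use $x_{1}\neq0$ (this is exactly why the zerofree normalization matters) to pick a functional $\phi$ with $\phi(x_{1})=1$; then $a_{k}^{(n)}=\phi(a_{k}^{(n)}x_{1})$ converges to a scalar $a_{k}$, and coordinatewise convergence on all of $N_{k}$ shows that $v$ coincides with $a_{k}x$ on the block $N_{k}$. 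Since $v\neq0$, some $a_{k_{0}}\neq0$, so the restriction of $v$ to $N_{k_{0}}$ is the non-zero multiple $a_{k_{0}}x$ of $x$. As $x\notin\ell_{q}(X)$ for each $q\in\Gamma$, this single block already forces $\sum_{j}\Vert v_{j}\Vert^{q}=\infty$ (and $\sup_{j}\Vert v_{j}\Vert=\infty$ when $q=\infty$), whence $v\notin\ell_{q}(X)$ for all $q\in\Gamma$; the analogous one-block estimate using $\limsup_{i}\Vert x_{i}\Vert>0$ handles $c_{0}(X)$ in part (b). Thus $V\setminus\{0\}$ lies in the desired set, proving spaceability in both cases.

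I expect the main obstacle to be the claim of the previous paragraph, namely that an arbitrary element of the \emph{closed} span, not merely a finite combination, retains the block structure $v=\sum_{k}a_{k}x^{(k)}$. This is precisely the point where conditions (b1) and (b2) combine with the zerofree normalization: (b2) upgrades norm-convergence to coordinatewise convergence, and $x_{1}\neq0$ guarantees that each block coefficient $a_{k}$ can be recovered continuously from a single coordinate, ruling out any leakage between blocks in the limit.
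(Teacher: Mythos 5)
Your proof is correct and follows essentially the same route as the paper: spread the zerofree vector $x^{0}$ over infinitely many disjoint blocks, take the closed span of the resulting block vectors, and use condition (b2) to upgrade convergence in $E$ to coordinatewise convergence, so that every non-zero element of the closure restricts on some block to a non-zero multiple of $x^{0}$, which kills membership in each $\ell_{q}(X)$ and in $c_{0}(X)$. The only cosmetic difference is that the paper realizes the subspace as $\overline{T(\ell_{\tilde{s}})}$ for an explicit bounded injection $T$ defined on $\ell_{\tilde{s}}$ (with $\tilde{s}=s$ in the $s$-Banach case), whereas you take the closed span of the block vectors directly and get infinite-dimensionality from their disjoint supports; the two subspaces coincide.
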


\begin{proof}
Put $A=\bigcup_{q\in\Gamma}\ell_{q}(X)$ in (a) and $A = c_0(X)$ in (b). Assume that $E-A$ is non-empty and
choose $x\in E-A$. Since $E$ is an invariant sequence space, $x^{0}\in E$, and
obviously $x^{0}\notin A$. Writing $x^{0}=(x_{j})_{j=1}^{\infty}$ we have that
$x^{0}\in E-A$ and $x_{j}\neq0$ for every $j$. Split $\mathbb{N}$ into
countably many infinite pairwise disjoint subsets $(\mathbb{N}_{i}%
)_{i=1}^{\infty}$. For every $i\in\mathbb{N}$ set $\mathbb{N}_{i}%
=\{i_{1}<i_{2}<\ldots\}$ and define
\[
y_{i}=\sum_{j=1}^{\infty}x_{j}e_{i_{j}}\in X^{\mathbb{N}}.
\]
Observe that $y_{i}^{0}=x^{0}$ for every $i$. So $0\neq y_{i}^{0}\in E$ for
every $i$. Hence each $y_{i}\in E$ because $E$ is an invariant sequence space.
Let us see that $y_{i}\notin A$: in (a) this occurs because $\Vert y_{i}\Vert_{r}=\Vert x^{0}\Vert_{r}=\Vert
x\Vert_{r}$ for every $0 < r \leq \infty$ and in (b) because $\|x_j\| \nrightarrow 0$. Let $K$ be the constant of condition
\ref{defi}(b1) and define $\tilde{s}=1$ if $E$ is a Banach space and
$\tilde{s}=s$ if $E$ is a $s$-Banach space, $0<s<1$. For $(a_{j}%
)_{j=1}^{\infty}\in\ell_{\tilde{s}}$,
\begin{align*}
\sum_{j=1}^{\infty}\Vert a_{j}y_{j}\Vert^{\tilde{s}}  & =\sum_{j=1}^{\infty
}|a_{j}|^{\tilde{s}}\Vert y_{j}\Vert^{\tilde{s}}\leq K^{\tilde{s}}\sum
_{j=1}^{\infty}\left\vert a_{j}\right\vert ^{\tilde{s}}\left\Vert y_{j}%
^{0}\right\Vert ^{\tilde{s}}\\
& =K^{\tilde{s}}\left\Vert x^{0}\right\Vert ^{\tilde{s}}\sum_{j=1}^{\infty
}\left\vert a_{j}\right\vert ^{\tilde{s}}=K^{\tilde{s}}\left\Vert
x^{0}\right\Vert ^{\tilde{s}}\left\Vert (a_{j})_{j=1}^{\infty}\right\Vert
_{\tilde{s}}^{\tilde{s}}<\infty.
\end{align*}
Thus $\sum_{j=1}^{\infty}\Vert a_{j}y_{j}\Vert<\infty$ if $E$ is a Banach
space and $\sum_{j=1}^{\infty}\Vert a_{j}y_{j}\Vert^{{s}}<\infty$ if $E$ is a
$s$-Banach space, $0<s<1$. In both cases the series $\sum_{j=1}^{\infty}%
a_{j}y_{j}$ converges in $E$, hence the operator
\[
T\colon\ell_{\tilde{s}}\longrightarrow E~~,~~T\left(  \left(  a_{j}\right)
_{j=1}^{\infty}\right)  =\sum\limits_{j=1}^{\infty}a_{j}y_{j}%
\]
is well defined. It is easy to see that $T$ is linear and injective. Thus
$\overline{T\left(  \ell_{\tilde{s}}\right)  }$ is a closed
infinite-dimensional subspace of $E$.
We just have to show that $\overline{T\left(  \ell_{\tilde{s}}\right)
}-\left\{  0\right\}  \subseteq E-A$. Let $z=\left(  z_{n}\right)
_{n=1}^{\infty}\in\overline{T\left(  \ell_{\tilde{s}}\right)  },$ $z\neq0$.
There are sequences $\left(  a_{i}^{(k)}\right)  _{i=1}^{\infty}\in
\ell_{\tilde{s}}$, $k\in\mathbb{N}$, such that $z=\lim_{k\rightarrow\infty
}T\left(  \left(  a_{i}^{(k)}\right)  _{i=1}^{\infty}\right)  $ in $E.$ Note
that, for each $k\in\mathbb{N}$,%
\[
T\left(  \left(  a_{i}^{(k)}\right)  _{i=1}^{\infty}\right)  =\sum
\limits_{i=1}^{\infty}a_{i}^{(k)}y_{i}=\sum\limits_{i=1}^{\infty}a_{i}%
^{(k)}\sum\limits_{j=1}^{\infty}x_{j}e_{i_{j}}=\sum\limits_{i=1}^{\infty}%
\sum\limits_{j=1}^{\infty}a_{i}^{(k)}x_{j}e_{i_{j}}.
\]
Fix $r\in\mathbb{N}$ such that $z_{r}\neq0.$ Since $\mathbb{N}=\bigcup
_{j=1}^{\infty}\mathbb{N}_{j}$, there are (unique) $m,t\in\mathbb{N}$ such that
$e_{m_{t}}=e_{r}$. Thus, for each $k\in\mathbb{N}$, the $r$-th coordinate of
$T\left(  \left(  a_{i}^{(k)}\right)  _{i=1}^{\infty}\right)  $ is the number
$a_{m}^{(k)}x_{t}.$ Condition \ref{defi}(b2) assures that convergence in $E$
implies coordinatewise convergence, so
\[
z_{r}=\lim_{k\rightarrow\infty}a_{m}^{(k)}x_{t}=x_{t}\cdot\lim_{k\rightarrow
\infty}a_{m}^{(k)}.
\]
It follows that $x_{t}\neq0.$ Hence $\lim_{k\rightarrow\infty}|a_{m}%
^{(k)}|=\frac{\left\Vert z_{r}\right\Vert }{\left\Vert x_{t}\right\Vert }%
\neq0$. For $j,k\in\mathbb{N}$, the $m_{j}$-th coordinate of $T\left(  \left(
a_{i}^{(k)}\right)  _{i=1}^{\infty}\right)  $ is $a_{m}^{(k)}x_{j}.$ Defining
$\alpha_{m}=\frac{\left\Vert z_{r}\right\Vert }{\left\Vert x_{t}\right\Vert
}\neq0,$
\[
\lim_{k\rightarrow\infty}\Vert a_{m}^{(k)}x_{j}\Vert=\lim_{k\rightarrow\infty
}|a_{m}^{(k)}|\Vert x_{j}\Vert=\Vert x_{j}\Vert\cdot\lim_{k\rightarrow\infty
}|a_{m}^{(k)}|=\alpha_{m}\left\Vert x_{j}\right\Vert
\]
for every $j\in\mathbb{N}$. On the other hand, coordinatewise convergence
gives $\lim_{k\rightarrow\infty}\Vert a_{m}^{(k)}x_{j}\Vert=\Vert z_{m_{j}%
}\Vert$, so $\Vert z_{m_{j}}\Vert=\alpha_{m}\Vert x_{j}\Vert$ for each
$j\in\mathbb{N}$. Observe that $m$, which depends on $r$, is fixed, so the
natural numbers $(m_{j})_{j=1}^{\infty}$ are pairwise distinct (remember that
$\mathbb{N}_{m}=\{m_{1}<m_{2}<\ldots\}$).\\
(a) As $x^{0}\notin A$, we have $\Vert
x^{0}\Vert_{q}=\infty$ for all $q\in\Gamma$. Assume first that $\infty \notin \Gamma$. In this case,
\[
\left\Vert z\right\Vert _{q}^{q}=\sum\limits_{n=1}^{\infty}\left\Vert
z_{n}\right\Vert ^{q}\geq\sum\limits_{j=1}^{\infty}\left\Vert z_{m_{j}%
}\right\Vert ^{q}=\sum\limits_{j=1}^{\infty}\alpha_{m}^{q}\cdot\left\Vert
x_{j}\right\Vert ^{q}=\alpha_{m}^{q}\cdot\left\Vert x^{0}\right\Vert _{q}%
^{q}=\infty,
\]
for all $q\in\Gamma$, proving that $z \notin \bigcup_{q\in\Gamma}\ell_{q}(X)$. If $\infty \in \Gamma$,
$$\|z\|_\infty = \sup_n \|z_n\| \geq \sup_j \|z_{m_j}\| = \alpha_m \cdot \sup_j \|x_j\| = \alpha_m \|x^0\|_\infty = \infty  ,$$
proving again that $z \notin \bigcup_{q\in\Gamma}\ell_{q}(X)$.\\
(b) As $x^{0}\notin A$, we have $\|x_j\| \nrightarrow 0$. Since $(\Vert z_{m_{j}}\|)_{j=1}^\infty $ is a subsequence of $(\Vert z_n \|)_{n=1}^\infty $ , $\Vert z_{m_{j}}\Vert=\alpha_{m}\Vert x_{j}\Vert$ for every $j$ and $\alpha_m \neq 0$, it is clear that $\|z_n\| \nrightarrow 0$. Thus $z \notin c_0(X)$.\\
\indent Therefore $z\notin A$ in both cases, so $\overline{T\left(
\ell_{\tilde{s}}\right)  }-\left\{  0\right\}  \subseteq E-A$.
\end{proof}


We list a few consequences.


When we write $F \subset E$ we mean that $E$ contains $F$ as a linear subspace
and $E \neq F$. We are not asking neither $E$ to contain an isomorphic copy of
$F$ nor the inclusion $F \hookrightarrow E$ to be continuous.

\begin{corollary}
Let $E$ be an invariant sequence space over $\mathbb{K}$.\\
{\rm(a)}
If $0 < p \leq\infty$ and $\ell_{p} \subset E$, then $E - \ell_{p}$ is
spaceable.\\
{\rm (b)} If $c_{0} \subset E$, then $E - c_{0}$ is spaceable.
\end{corollary}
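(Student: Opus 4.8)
The plan is to derive both statements directly from Theorem~\ref{theorem}, specialized to the scalar case $X=\mathbb{K}$. The entire content of the corollary lies in that theorem; the only thing left to verify is that the relevant difference set is non-empty, after which spaceability is automatic. So the whole argument reduces to exhibiting, in each case, a single element of $E$ lying outside the set being subtracted.

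For part (a), I would take $\Gamma=\{p\}$, so that $\bigcup_{q\in\Gamma}\ell_{q}(\mathbb{K})=\ell_{p}$. By hypothesis $\ell_{p}\subset E$ in the sense fixed just before the statement, which means that $\ell_{p}$ sits inside $E$ as a linear subspace with $\ell_{p}\neq E$. Proper containment gives some $x\in E$ with $x\notin\ell_{p}$, i.e. $E-\ell_{p}\neq\emptyset$. Theorem~\ref{theorem}(a) then immediately yields that $E-\ell_{p}$ is spaceable.

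For part (b), the argument is identical with the union replaced by $c_{0}$: the proper inclusion $c_{0}\subset E$ provides some $x\in E\setminus c_{0}$, so that $E-c_{0}\neq\emptyset$, and Theorem~\ref{theorem}(b) gives spaceability.

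I do not expect any real obstacle, since nothing beyond the main theorem is required. The one point to handle carefully is the meaning of $\subset$ as explained in the remark preceding the corollary: it is exactly this convention that guarantees the containment is \emph{proper} and hence that the difference is non-empty. It is worth stressing that the convention does \emph{not} presuppose any continuity of the inclusion $\ell_{p}\hookrightarrow E$ (or $c_{0}\hookrightarrow E$) nor the presence of an isomorphic copy; none of this is needed, because Theorem~\ref{theorem} only requires set-theoretic membership of sequences together with the invariance properties of $E$.
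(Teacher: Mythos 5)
Your proposal is correct and coincides with the paper's (implicit) argument: the corollary is stated as an immediate consequence of Theorem~\ref{theorem}, with the convention on $\subset$ supplying the non-emptiness of $E-\ell_p$ (respectively $E-c_0$) exactly as you describe. Taking $\Gamma=\{p\}$ in part (a) and invoking part (b) of the theorem for $c_0$ is precisely what is intended, and no further verification is needed.
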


From the results due to Kitson and Timoney \cite{timoney} we derive that
$\ell_{p}^{u}(X) - \ell_{p}(X)$ for $p \geq1$, and $\ell_{p}- \bigcup
\limits_{0<q<p}\ell_{q}$ for $p > 1$, are spaceable. However, as is made clear
in \cite[Remark 2.2]{timoney}, their results are restricted to Fr\'echet
spaces (see the Introduction). Next we extend the spaceability of $\ell
_{p}^{u}(X) - \ell_{p}(X)$ and $\ell_{p}-\bigcup\limits_{0<q<p}\ell_{q}$ to
the non-locally convex case:

\begin{corollary}
$\ell_{m(s;p)}\left(  X\right)  - \ell_{p}(X) $ and $\ell_{p}^{u}(X) -
\ell_{p}(X)$ are spaceable for $0 < p \leq s < \infty$ and every
infinite-dimensional Banach space $X$. Hence $\ell_{p}^{w}(X) - \ell_{p}(X)$
is spaceable as well.
\end{corollary}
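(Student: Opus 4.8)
The plan is to observe that each of the three spaces in the statement is an invariant sequence space over $X$, so that Theorem \ref{theorem} applies and reduces the whole corollary to a non-emptiness question. Concretely, $\ell_{m(s;p)}(X)$ is an invariant sequence space by Example \ref{exam}(d), while $\ell_p^u(X)$ and $\ell_p^w(X)$ are invariant sequence spaces by Example \ref{exam}(a). Choosing $\Gamma=\{p\}$ (which is a legitimate subset of $(0,\infty]$ since $0<p<\infty$), Theorem \ref{theorem}(a) tells us that $E-\ell_p(X)$ is either empty or spaceable for each of these choices of $E$. Thus it remains only to check that $\ell_{m(s;p)}(X)-\ell_p(X)$ and $\ell_p^u(X)-\ell_p(X)$ are non-empty when $X$ is infinite-dimensional.

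First I would handle $\ell_p^u(X)-\ell_p(X)$. Since $\ell_p(X)\subseteq\ell_p^u(X)$, non-emptiness is exactly strictness of this inclusion, which is a Dvoretzky--Rogers phenomenon: for $p=1$ it is the classical Dvoretzky--Rogers theorem, and for general $0<p<\infty$ one obtains an unconditionally $p$-summable sequence that is not absolutely $p$-summable by distributing, over successive blocks, vectors drawn from almost-Euclidean finite-dimensional sections of $X$ (which exist by Dvoretzky's theorem, as $X$ is Banach). Crucially this construction never uses $p\ge 1$, so it supplies the non-emptiness uniformly for all $p>0$; this is precisely what upgrades the $p\ge 1$ result of Kitson--Timoney to the full range.

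Next, for the mixed space the inclusion $\ell_p(X)\subseteq\ell_{m(s;p)}(X)$ holds for $0<p\le s<\infty$, and its strictness for infinite-dimensional $X$ is again a Dvoretzky--Rogers type statement, now for mixed sequence spaces (cf. the references in Example \ref{exam}(d)). The subcase $s=p$ is the most transparent, since there $\ell_{m(p;p)}(X)$ already contains $\ell_p^w(X)\supsetneq\ell_p(X)$; the genuinely substantive input is the case $p<s$, and this is where I would lean on the cited mixed-space theorem. Combined with the previous paragraph and Theorem \ref{theorem}(a), this establishes spaceability of both $\ell_{m(s;p)}(X)-\ell_p(X)$ and $\ell_p^u(X)-\ell_p(X)$.

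Finally, the concluding assertion follows formally: from $\ell_p^u(X)\subseteq\ell_p^w(X)$ we get $\ell_p^u(X)-\ell_p(X)\subseteq\ell_p^w(X)-\ell_p(X)$, so the non-emptiness already secured for the former forces non-emptiness of the latter, and a further application of Theorem \ref{theorem}(a) with $\Gamma=\{p\}$ to the invariant sequence space $\ell_p^w(X)$ gives its spaceability. The main obstacle in the whole argument is the non-emptiness step: one must have a Dvoretzky--Rogers type strict inclusion valid without any local convexity assumption, i.e.\ down to $0<p<1$, since the spaceability machinery of Theorem \ref{theorem} otherwise does all the remaining work.
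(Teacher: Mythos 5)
Your proposal is correct and follows the same overall route as the paper: all three spaces are invariant sequence spaces, Theorem \ref{theorem}(a) with $\Gamma=\{p\}$ reduces everything to non-emptiness, non-emptiness is the strictness of the inclusions $\ell_{p}(X)\subseteq\ell_{p}^{u}(X)$ and $\ell_{p}(X)\subseteq\ell_{m(s;p)}(X)$, and the passage to $\ell_{p}^{w}(X)$ is handled identically via $\ell_{p}^{u}(X)\subseteq\ell_{p}^{w}(X)$. For the mixed spaces both you and the paper ultimately delegate the strict inclusion to the literature (the paper cites \cite[Proposition 1.2(1)]{mario} for $\ell_{p}(X)=\ell_{m(\infty;p)}(X)\subseteq\ell_{m(s;p)}(X)$ and \cite[Theorem 2.1]{mario} for strictness), so the only real divergence is how you get $\ell_{p}^{u}(X)\neq\ell_{p}(X)$ when $0<p<1$: you propose to rerun the Dvoretzky--Rogers construction with almost-Euclidean sections and assert, without verification, that it ``never uses $p\geq 1$,'' whereas the paper sidesteps any construction by noting that the identity of an infinite-dimensional Banach space is not absolutely $p$-summing for any $0<p<\infty$, because $p$-summing operators are $q$-summing for $p\leq q$ and the identity is not $1$-summing by classical Dvoretzky--Rogers. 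Your construction can indeed be carried out for all $p>0$, but as written it is the one unproved step in your argument; the paper's inclusion-theorem shortcut settles exactly that point in two lines.
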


\begin{proof} By \cite[Proposition 1.2(1)]{mario} we have that $\ell_{m(\infty;p)}\left(  X\right) = \ell_p(X) \subseteq \ell_{m(s;p)}\left(  X\right)  $, and by \cite[Theorem 2.1]{mario}, $\ell_{m(s;p)}\left(  X\right) \neq \ell_p(X)$. On the other hand, the identity operator on any infinite-dimensional Banach space fails to
be absolutely $p$-summing for every $0 < p < \infty$ (the case $1 \leq p <
\infty$ is well known, and the case $0 < p < 1$ follows from the fact that
$p$-summing operators are $q$-summing whenever $p \leq q$). So $\ell_{p}%
^{u}(X) \neq\ell_{p}(X)$. As $\ell_{m(s;p)}\left(  X\right)  $ and $\ell_{p}^{u}(X)$ are invariant sequence spaces
over $X$, the first assertion follows from Theorem \ref{theorem}. As $\ell_{p}^{u}(X)
\subseteq\ell_{p}^{w}(X)$, the second assertion follows.
\end{proof}

Before proving that $\ell_{p}-\bigcup\limits_{0<q<p}\ell_{q}$ is spaceable we
have to check first that it is non-empty. Although we think this is folklore,
we have not been able to find a reference in the literature. So, for the sake
of completeness, we include a short proof, which was kindly communicated to us
by M. C. Matos.

\begin{lemma}
\label{lemmanovo} $\ell_{p}-\bigcup\limits_{0<q<p}\ell_{q} \neq\emptyset$ for
every $p > 0$.
\end{lemma}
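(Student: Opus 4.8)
The plan is to exhibit a single explicit sequence that lies in $\ell_p$ but in no $\ell_q$ with $q<p$; since any nonzero element of the difference set already witnesses nonemptiness, one concrete example suffices.

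First I would reduce to the case $p=1$. For a sequence $y=(y_n)_{n=1}^\infty$ of nonnegative reals and any $0<q\le p$ one has $\sum_{n}\big(y_n^{1/p}\big)^{q}=\sum_{n}y_n^{q/p}$, so if we set $x_n:=y_n^{1/p}$ then $x\in\ell_q$ if and only if $y\in\ell_{q/p}$. As $q$ ranges over $(0,p)$ the exponent $q/p$ ranges over $(0,1)$, and for $q=p$ we get $x\in\ell_p\iff y\in\ell_1$. Hence it is enough to produce a $y\in\ell_1$ such that $y\notin\ell_r$ for every $r\in(0,1)$, and then take $x_n=y_n^{1/p}$.

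Next I would use the Bertrand-type sequence
\[
y_n=\frac{1}{n\,(\log(n+1))^{2}}\qquad(n\in\mathbb{N}).
\]
The membership $y\in\ell_1$ follows from the integral (or Cauchy condensation) test, since $\sum_{n}\frac{1}{n(\log n)^{2}}$ converges. The remaining point is that $y\notin\ell_r$ for every $0<r<1$: here $y_n^{r}=\frac{1}{n^{r}(\log(n+1))^{2r}}$, and because the exponent $r$ on $n$ is strictly below $1$, the logarithmic factor cannot restore summability. Concretely, fixing any $r'$ with $r<r'<1$, one has $(\log(n+1))^{2r}\le n^{\,r'-r}$ for all large $n$, whence $n^{r}(\log(n+1))^{2r}\le n^{r'}$ and therefore $y_n^{r}\ge n^{-r'}$ eventually; comparison with the divergent series $\sum_{n}n^{-r'}$ gives $\sum_{n}y_n^{r}=\infty$.

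I do not expect any substantial obstacle here: the only real content is the convergence/divergence dichotomy for the chosen sequence, which is a routine estimate for a Bertrand series. The single point requiring mild care is to pick a sequence whose $p$-th powers sit exactly at the borderline $\sum 1/n$ corrected by a \emph{convergent} logarithmic factor, so that passing to any smaller power $q<p$ pushes the governing exponent below $1$ and forces divergence while leaving the $\ell_p$-membership intact.
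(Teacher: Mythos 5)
Your proof is correct, and it takes a genuinely different route from the paper's. You produce an explicit witness: the Bertrand-type sequence $y_n = \frac{1}{n(\log(n+1))^2}$ lies in $\ell_1$ by the condensation/integral test, while for any $0<r<1$ the comparison $y_n^r \geq n^{-r'}$ (eventually, for any $r<r'<1$) forces $\sum_n y_n^r = \infty$; the substitution $x_n = y_n^{1/p}$ then transports the example from the reference case to general $p$. The paper instead argues by contradiction and without exhibiting an element: assuming $\ell_2 = \bigcup_{0<q<2}\ell_q$, H\"older's inequality shows that $\sum_n |y_n|/\sqrt{n}$ converges for every $y \in \ell_2$, and the Banach--Steinhaus theorem applied to the partial-sum functionals $T_k(y)=\sum_{n=1}^k y_n/\sqrt{n}$ then yields a continuous linear functional on $\ell_2$ whose representing sequence $(1/\sqrt{n})_n$ would have to lie in $\ell_2$ --- a contradiction; the same power substitution ($x_n = |x_n|^{2/p}$ in their notation) handles general $p$. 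Your approach is more elementary and constructive, requiring only standard series estimates; the paper's is softer, leaning on duality and uniform boundedness, and is arguably shorter once those tools are taken for granted. Both reductions to a single reference exponent are identical in spirit.
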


\begin{proof}
Since $\left(  \frac{1}{\sqrt{n}}\right)  _{n=1}^{\infty}\notin\ell_{2}$ and
$\left(  \frac{1}{\sqrt{n}}\right)  _{n=1}^{\infty}\in\ell_{r}$ for all $r>2$,
for each $\left(  y_{n}\right)  _{n=1}^{\infty}\in\ell_{q},$ $0<q<2,$ it
follows from H\"{o}lder's inequality that%
\[
\sum\limits_{n=1}^{\infty}\left\vert \frac{1}{\sqrt{n}}y_{n}\right\vert
\leq\left\Vert \left(  \frac{1}{\sqrt{n}}\right)  _{n=1}^{\infty}\right\Vert
_{q^{\prime}}\cdot\left\Vert \left(  y_{n}\right)  _{n=1}^{\infty}\right\Vert
_{q}<\infty.
\]
Supposing that $\ell_{2}=\bigcup\limits_{0<q<2}\ell_{q},$ we have that
$\sum\limits_{n=1}^{\infty}\left\vert \frac{1}{\sqrt{n}}y_{n}\right\vert
<\infty$ for every $\left(  y_{n}\right)  _{n=1}^{\infty}\in$ $\ell_{2}$. So,
consider, for each positive integer $k$, the continuous linear functional on
$\ell_{2}$ defined by $T_{k}\left(  \left(  y_{n}\right)  _{n=1}^{\infty
}\right)  =\sum\limits_{n=1}^{k}\frac{1}{\sqrt{n}}y_{n}.$ As
\[
\sup_{k\in\mathbb{N}}\left\vert T_{k}\left(  \left(  y_{n}\right)
_{n=1}^{\infty}\right)  \right\vert =\sum\limits_{n=1}^{\infty}\left\vert
\frac{1}{\sqrt{n}}y_{n}\right\vert <\infty
\]
for each $\left(  y_{n}\right)  _{n=1}^{\infty}\in\ell_{2}$, by the
Banach-Steinhaus Theorem we conclude that
\[
T\left(  \left(  y_{n}\right)  _{n=1}^{\infty}\right)  =\lim_{k}T_{k}\left(
\left(  y_{n}\right)  _{n=1}^{\infty}\right)  =\sum\limits_{n=1}^{\infty}%
\frac{1}{\sqrt{n}}y_{n}%
\]
defines a continuous linear functional on $\ell_{2}$ and it follows that
$\left(  \frac{1}{\sqrt{n}}\right)  _{n=1}^{\infty}\in\ell_{2}$ - a
contradiction which proves that there is $x\in\ell_{2}-\bigcup
\limits_{0<q<2}\ell_{q}.$ So $\left(  \left\vert x_{n}\right\vert ^{\frac
{2}{p}}\right)  _{n=1}^{\infty}\in\ell_{p}-\bigcup\limits_{0<q<p}%
\ell_{q}.$
\end{proof}

\begin{corollary}
\label{corol} $\ell_{p}-\bigcup\limits_{0<q<p}\ell_{q}$ is spaceable for every
$p > 0$.
\end{corollary}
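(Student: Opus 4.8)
The plan is to obtain this as an immediate specialization of Theorem \ref{theorem}, so that essentially no new work is required beyond combining the pieces already assembled. First I would fix $p > 0$ and take $E = \ell_p$ with scalar field $X = \mathbb{K}$. By Example \ref{exam}(a), $\ell_p$ is an invariant sequence space over $\mathbb{K}$ for every $0 < p \leq \infty$, so in particular the hypotheses of Theorem \ref{theorem} are satisfied for our chosen $p$.

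Next I would choose the index set $\Gamma = (0,p) \subseteq (0,\infty]$. With this choice one has $\bigcup_{q \in \Gamma} \ell_q(\mathbb{K}) = \bigcup_{0<q<p} \ell_q$, so the set in the statement is exactly the set $E - \bigcup_{q \in \Gamma} \ell_q(X)$ appearing in Theorem \ref{theorem}(a). Applying that part of the theorem then tells us, with no further argument, that $\ell_p - \bigcup_{0<q<p} \ell_q$ is \emph{either empty or spaceable}.

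It therefore only remains to discard the empty alternative, that is, to check that $\ell_p - \bigcup_{0<q<p} \ell_q \neq \emptyset$. This is precisely the content of the preceding Lemma \ref{lemmanovo}, and I expect this non-emptiness to be the sole genuine obstacle of the corollary; the remainder is a formal invocation of the machinery already in place. Combining Theorem \ref{theorem}(a) (which supplies the dichotomy) with Lemma \ref{lemmanovo} (which eliminates emptiness) yields the spaceability of $\ell_p - \bigcup_{0<q<p} \ell_q$ for every $p > 0$, completing the proof.
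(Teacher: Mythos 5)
Your proposal is correct and matches the paper's own proof exactly: both apply Theorem \ref{theorem}(a) with $E=\ell_p$ (an invariant sequence space by Example \ref{exam}(a)) and $\Gamma=(0,p)$, and then invoke Lemma \ref{lemmanovo} to rule out the empty alternative. Nothing further is needed.
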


\begin{proof} We know that $\ell_{p}$ is an invariant sequence space over
$\mathbb{K}$ and from Lemma \ref{lemmanovo} we have $\ell_{p}-
\bigcup\limits_{0<q<p}\ell_{q} \neq\emptyset$. The result follows from Theorem
\ref{theorem}.
\end{proof}

\begin{remark}
\rm Theorem \ref{theorem} can be applied in a variety of other situations.
For example, for Lorentz spaces it applies to $\ell_{q,r} - \ell_{p}$ for $0 <
p < q$ and $r > 0$, and to $\ell_{p,q}- \ell_{p}$ for $0 < p < q$. We believe
that the usefulness of Theorem \ref{theorem} is well established, so we
refrain from giving further applications.
\end{remark}

Although our results concern spaceability of complements of linear subspaces, the same technique gives the spaceability of sets that are not related to linear subspaces at all. Rewriting the proof of Theorem \ref{theorem} we get:
\begin{proposition} Let $E$ be an invariant sequence space over the Banach space $X$. Let $A \subseteq E$ be such that:\\
{\rm (i)} For $x \in E$, $x \in A$ if and only if $x^0 \in A$.\\
{\rm (ii)} If $x = (x_j)_{j=1}^\infty \in A$ and $y = (y_j)_{j=1}^\infty \in E$ is such that $(\|y_j\|)_{j=1}^\infty $ is a multiple of a subsequence of $(\|x_j\|)_{j=1}^\infty $, then $y \in A$.\\
{\rm (iii)} There is $x \in E - A$ with $x^0 \neq 0$.\\
Then $E - A$ is spaceable.
\end{proposition}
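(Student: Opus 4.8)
The plan is to transcribe the proof of Theorem~\ref{theorem} almost verbatim, replacing the two explicit norm computations at the end by a single appeal to hypothesis (ii). By (iii) I choose $x\in E-A$ with $x^{0}\neq 0$; since $x\notin A$, hypothesis (i) forces $x^{0}\notin A$. As $x^{0}\in E$ is nonzero, I may replace $x$ by $x^{0}$ and assume from the outset that $x=(x_{j})_{j=1}^{\infty}\in E-A$ has all coordinates nonzero. Splitting $\mathbb{N}=\bigcup_{i=1}^{\infty}\mathbb{N}_{i}$ into infinitely many infinite pairwise disjoint sets $\mathbb{N}_{i}=\{i_{1}<i_{2}<\cdots\}$, I define $y_{i}=\sum_{j=1}^{\infty}x_{j}e_{i_{j}}$ exactly as before. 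Since $y_{i}^{0}=x^{0}\neq 0$, each $y_{i}\in E$, and the operator $T\colon\ell_{\tilde{s}}\to E$, $T((a_{j})_{j})=\sum_{j}a_{j}y_{j}$, is well defined, linear and injective. None of these steps uses anything about $A$: they rely only on conditions (b1) and (b2) of Definition~\ref{defi}, so they carry over unchanged and $\overline{T(\ell_{\tilde{s}})}$ is a closed infinite-dimensional subspace of $E$.

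It remains to prove $\overline{T(\ell_{\tilde{s}})}\setminus\{0\}\subseteq E-A$. Fix $z=(z_{n})_{n=1}^{\infty}$ in this closure with $z\neq 0$ and repeat the coordinatewise analysis of Theorem~\ref{theorem} word for word: choosing $r$ with $z_{r}\neq 0$ and the unique $m,t$ with $e_{m_{t}}=e_{r}$, one obtains $\alpha_{m}:=\|z_{r}\|/\|x_{t}\|\neq 0$ together with the identities
\[
\|z_{m_{j}}\|=\alpha_{m}\|x_{j}\|\qquad(j\in\mathbb{N}),
\]
where the indices $m_{1}<m_{2}<\cdots$ are strictly increasing. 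This is the only output of the construction I will use.

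Now the two $\ell_{q}/c_{0}$ arguments of Theorem~\ref{theorem} get replaced by a single abstract observation. Because $m_{1}<m_{2}<\cdots$, the sequence $(\|z_{m_{j}}\|)_{j=1}^{\infty}$ is a genuine subsequence of $(\|z_{n}\|)_{n=1}^{\infty}$, and the displayed identities rewrite as $(\|x_{j}\|)_{j=1}^{\infty}=\tfrac{1}{\alpha_{m}}(\|z_{m_{j}}\|)_{j=1}^{\infty}$ with $\tfrac{1}{\alpha_{m}}\neq 0$. Thus the norm sequence of $x=x^{0}$ is a nonzero multiple of a subsequence of the norm sequence of $z$. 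Suppose, for contradiction, that $z\in A$. Applying hypothesis (ii) with $z$ in the role of the member of $A$ and $x\in E$ in the role of $y$, I conclude $x\in A$, contradicting $x\notin A$. Hence $z\notin A$, that is $z\in E-A$, which proves spaceability.

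The proof is essentially a mechanical rewriting, so the only point needing care—the step I would treat as the crux—is checking that hypothesis (ii) is invoked in the correct direction. The relation supplied by the construction expresses the norm sequence of $x$ in terms of that of $z$, not conversely, so $z$ must play the role of the ambient $A$-member and $x$ the role of $y$; and one must confirm that the $m_{j}$ are strictly increasing, which is precisely what makes $(\|z_{m_{j}}\|)_{j}$ a bona fide subsequence of $(\|z_{n}\|)_{n}$ rather than a mere reindexing. Both facts are already established in the proof of Theorem~\ref{theorem}, so no genuinely new difficulty arises.
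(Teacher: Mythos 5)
Your proof is correct and is precisely what the paper intends: the paper offers no separate argument for this proposition, saying only that it is obtained by ``rewriting the proof of Theorem~\ref{theorem}'', and your transcription is that rewriting. In particular you apply hypothesis (ii) in the right direction (with $z$ as the member of $A$ and $x^{0}$ as the $y$, using that $(\|x_j\|)_j=\tfrac{1}{\alpha_m}(\|z_{m_j}\|)_j$ with $m_1<m_2<\cdots$), which is the only point where the original endgame genuinely changes.
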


\section{Norm-attaining operators}

In this section we show that the technique used in the previous section can be
used in a completely different context. Specifically, we extend a result from
\cite{eduardo} concerning the lineability of the set of norm-attaining operators.

Given Banach spaces $E$ and $F$ and $x_{0}\in E$ such that $\Vert x_{0}%
\Vert=1$ ($x_{0}$ is said to be a \textit{norm-one vector}), a continuous
linear operator $u\colon E\longrightarrow F$ attains its norm at $x_{0}$ if
$\Vert u(x_{0})\Vert=\Vert u\Vert$. By $\mathcal{N\!A}^{x_{0}}(E;F)$ we mean
the set of continuous linear operators from $E$ to $F$ that attain their norms
at $x_{0}$.

In \cite[Proposition 6]{eduardo} it is proved that if $F$ contains an
isometric copy of $\ell_{q}$ for some $1\leq q<\infty,$ then $\mathcal{N\!A}%
^{x_{0}}(E;F)$ is $\aleph_{0}$-lineable. We generalize this result showing
that this set is $c$-lineable:

\begin{proposition}
\label{Lineability of NA} Let $E$ and $F$ be Banach spaces so that $F$
contains an isometric copy of $\ell_{q}$ for some $1\leq q<\infty,$ and let
$x_{0}$ be a norm-one vector in $E.$ Then $\mathcal{N\!A}^{x_{0}}(E;F)$ is $c$-lineable.
\end{proposition}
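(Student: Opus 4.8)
The plan is to exploit the hypothesis that $F$ contains an isometric copy of $\ell_q$ in order to manufacture a $c$-dimensional family of norm-attaining operators. First I would fix an isometric embedding $\iota\colon\ell_q\hookrightarrow F$ and a norm-one functional $\varphi\in E^{*}$ that attains its norm at $x_0$, i.e. $\varphi(x_0)=\Vert\varphi\Vert=1$ (such a functional exists by Hahn--Banach). The idea is that the rank-one operators $x\mapsto\varphi(x)\cdot\iota(b)$ for norm-one $b\in\ell_q$ automatically attain their norm at $x_0$: indeed $\Vert\varphi(x)\iota(b)\Vert=|\varphi(x)|\,\Vert b\Vert_q\le\Vert x\Vert$ with equality at $x=x_0$, so the norm of this operator is exactly $\Vert b\Vert_q$ and it is attained at $x_0$. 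This reduces the problem to producing a $c$-dimensional subspace of $\ell_q$ all of whose nonzero elements are, after the obvious embedding, still norm-attaining at $x_0$.

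Next I would transport the spaceability machinery from Section~1 into this setting. The key observation is that for a map of the form $u_v\colon x\mapsto\varphi(x)\cdot\iota(v)$, with $v\in\ell_q$ arbitrary, one has $\Vert u_v\Vert=\Vert v\Vert_q$ and $u_v$ attains its norm at $x_0$ whenever $v\ne0$, because $\Vert u_v(x_0)\Vert=|\varphi(x_0)|\Vert v\Vert_q=\Vert v\Vert_q=\Vert u_v\Vert$. The assignment $v\mapsto u_v$ is linear and injective from $\ell_q$ into $\mathcal{L}(E;F)$, and it is isometric. Therefore it suffices to find a $c$-dimensional subspace $V\subseteq\ell_q$ such that \emph{every} nonzero $v\in V$ gives a norm-attaining $u_v$; by the computation above this is automatic, so in fact any $c$-dimensional subspace of $\ell_q$ will do, and $\ell_q$ itself is $c$-dimensional. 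Concretely, $\{u_v : v\in\ell_q\}$ is a linear subspace of $\mathcal{L}(E;F)$ of dimension $c$ (since $\dim\ell_q=c$), and $u_v\in\mathcal{N\!A}^{x_0}(E;F)$ for every $v\ne0$, which is exactly what $c$-lineability asserts.

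The main obstacle, and the place where the technique of Theorem~\ref{theorem} genuinely enters, is guaranteeing that every nonzero vector in the constructed family attains its norm \emph{at the prescribed point $x_0$}, not merely somewhere on the unit sphere. The rank-one trick above handles this cleanly precisely because the scalar factor $\varphi(\cdot)$ is common to all operators in the family and is itself norm-attaining at $x_0$; this is what forces the norm of $u_v$ to be computed at $x_0$ simultaneously for all $v$. If one instead tried to build operators whose $\ell_q$-valued part varied in a more essential way, controlling the point of norm attainment would be the hard part, since norm attainment is not preserved under linear combinations in general. I would therefore emphasize in the write-up that the role of $x_0$ is pinned down once and for all by the single functional $\varphi$, and that the infinite-dimensionality of $\ell_q$ (hence cardinality $c$) is what upgrades the $\aleph_0$-lineability of \cite[Proposition~6]{eduardo} to full $c$-lineability. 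A final routine check is that $\mathcal{N\!A}^{x_0}(E;F)\cup\{0\}$ indeed contains this subspace, i.e. that $u_v$ is a bounded operator for each $v$, which follows from $\Vert u_v\Vert=\Vert v\Vert_q<\infty$.
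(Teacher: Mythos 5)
Your proof is correct, but it takes a genuinely different route from the paper's. You generate the $c$-dimensional subspace out of rank-one operators $u_v=\varphi(\cdot)\,\iota(v)$, $v\in\ell_q$, where the single Hahn--Banach functional $\varphi$ norms $x_0$; the computation $\Vert u_v\Vert=\Vert v\Vert_q=\Vert u_v(x_0)\Vert$ is exactly right, the assignment $v\mapsto u_v$ is linear and injective, and $\dim\ell_q=c$ (since $|\ell_q|=c$ and every infinite-dimensional Banach space has algebraic dimension at least $c$), so $\{u_v:v\in\ell_q\}$ is a $c$-dimensional subspace contained in $\mathcal{N\!A}^{x_0}(E;F)\cup\{0\}$. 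Note that your argument never really uses the isometric copy of $\ell_q$: it works verbatim with any infinite-dimensional subspace of $F$ in place of $\iota(\ell_q)$, so you in fact prove a stronger statement. The paper proceeds differently: it reduces to $F=\ell_q$, fixes an \emph{arbitrary} non-zero $u\in\mathcal{N\!A}^{x_0}(E;\ell_q)$, splits $\mathbb{N}$ into infinitely many pairwise disjoint infinite sets to manufacture disjointly supported copies $u^{(k)}$ of $u$ with $\Vert u^{(k)}(x)\Vert=\Vert u(x)\Vert$ for all $x$, and sends $(a_k)_{k=1}^{\infty}\in\ell_1$ to $\sum_k a_k u^{(k)}$; disjointness of the supports forces every non-zero element of the image to attain its norm at $x_0$. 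That longer route buys two things: it showcases, as the section intends, the same ``disjoint supports plus summable coefficients'' technique used in Theorem \ref{theorem}, and it produces a $c$-dimensional subspace built around any prescribed norm-attaining operator $u$ (whose elements need not be rank one), whereas your subspace consists entirely of operators factoring through the one-dimensional quotient determined by $\varphi$. For the proposition as stated, your shorter argument suffices.
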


\begin{proof}
The beginning of the proof follows the lines of the proof of \cite[Proposition 6]{eduardo}. It suffices to prove the result for $F=\ell_{q}$. Split $\mathbb{N}$
into countably many infinite pairwise disjoint subsets $(A_k)_{k=1}^\infty$.
For each positive integer $k$, write $A_{k}=\{a_{1}^{(k)}<a_{2}^{(k)}<\ldots\}$ and define%
\[
\ell_{q}^{(k)}:=\left\{  x\in\ell_{q}:x_{j}=0\text{ if }j\notin A_{k}\right\}
.
\]
Fix a non-zero operator $u \in \mathcal{N\!A}^{x_{0}}(E;F)$ and proceed as in the proof of \cite[Proposition 6]{eduardo} to get a sequence $(u^{(k)})_{k=1}^\infty$ of operators belonging to $\mathcal{N\!A}^{x_{0}%
}(E;\ell_{q}^{(k)})$ such that $\|u^{(k)}(x)\| = \|u(x)\|$ for every $k$ and every $x \in E$. By composing these operators with the inclusion
$\ell_{q}^{(k)} \hookrightarrow \ell_{q}$ we get operators (and we keep the notation $u^{(k)}$ for the sake of simplicity) belonging to $\mathcal{N\!A}^{x_{0}}(E;\ell_{q})$. For every $(a_{k})_{k=1}^{\infty} \in \ell_1$,
$$\sum_{k=1}^\infty \|a_ku^{(k)}\| = \sum_{k=1}^\infty |a_k|\|u^{(k)}\| = \sum_{k=1}^\infty |a_k| \|u^{(k)}(x_0)\| = \|u(x_0)\| \sum_{k=1}^\infty |a_k| < \infty, $$
so the map
$$T \colon \ell_{1}\longrightarrow\mathcal{L}(E;\ell_{q})~\,,\,~ T((a_{k})_{k=1}^{\infty})= {\displaystyle\sum\limits_{k=1}^{\infty}} a_{k}u^{(k)}$$
is well-defined. It is clear that $T$ is linear and injective. Hence $T(\ell_{1})$ is a $c$-dimensional subspace of $\ell_q$. Since the supports of the operators $u^{(k)}$ are pairwise disjoint, $T(\ell_{1})\subseteq\mathcal{N\!A}^{x_{0}}(E;\ell_{q})$.
\end{proof}

\medskip

\noindent[Geraldo Botelho and Vin\'icius V. F\'avaro] Faculdade de
Matem\'atica, Universidade Federal de Uberl\^andia, 38.400-902 - Uberl\^andia,
Brazil, e-mails: botelho@ufu.br, vvfavaro@gmail.com.

\medskip

\noindent[Diogo Diniz] UAME-UFCG, Caixa Postal 10044 - 58.109-970, Campina
Grande, Brazil, e-mail: diogodme@gmail.com.

\medskip

\noindent[Daniel Pellegrino] Departamento de Matem\'atica, Universidade
Federal da Pa\-ra\'iba, 58.051-900 - Jo\~ao Pessoa, Brazil, e-mail: dmpellegrino@gmail.com.

\end{document}